\documentclass[a4paper,11pt]{article}

\usepackage[top=3.0cm,bottom=3.0cm,left=2.5cm,right=2.5cm]{geometry}
\usepackage{tikz}
\usepackage{graphics,epsfig,psfrag}
\usepackage{graphicx}
\usepackage{amsfonts}
\usepackage{amssymb}
\usepackage{amsmath}
\usepackage{amsthm}
\usepackage[english]{babel}
\usepackage{ctex}
\usepackage{hyperref}
\usepackage{cases}
\usepackage{authblk}
\usepackage{mathtools}
\usepackage{pifont}

\newtheorem{lemma}{Lemma}[section]
\newtheorem{theorem}[lemma]{Theorem}

\newtheorem{claim}[]{\noindent Claim}[section]

\newtheorem{conjecture}[lemma]{Conjecture}


\begin{document}

\title{A simple proof of the existence of complete bipartite graph immersion in graphs with independence number two}

        \author[1]{Rong Chen\footnote{Email: rongchen@fzu.edu.cn (R. Chen).}}
		\author[1]{Zijian Deng\footnote{Email: zj1329205716@163.com.(corresponding author).}}

\affil[1]{Center for Discrete Mathematics, Fuzhou University,  China}

	\date{}
	\maketitle
	\begin{abstract}
 Hadwiger's conjecture for the immersion relation posits that every graph $G$ contains an immersion of the complete graph $K_{\chi(G)}$. Vergara showed that this is equivalent to saying that every $n$-vertex graph $G$ with $\alpha(G)=2$ contains an immersion of the complete graph on $\lceil\frac{n}{2}\rceil$ vertices. Recently, Botler et al. showed that
every $n$-vertex graph $G$ with $\alpha(G)=2$ contains every complete bipartite graph on
$\lceil\frac{n}{2}\rceil$ vertices as an immersion. In this paper, we give a much simpler proof of this result.

	\end{abstract}
	
	
	\textbf{Keywords}: immersion; Hadwiger's conjecture; Lescure-Meyniel conjecture.

\footnote{This research was partially supported by grants from the National Natural Sciences
Foundation of China (No. 11971111).}

\section{\bf Introduction and Preliminaries}
All graphs considered in this paper are finite and simple. Let $G=(V,E)$ be a graph. An \emph{independent set} (or a \emph{stable set}) is a subset of $V(G)$ that are pairwise non-adjacent. Let $\alpha(G)$ be the \emph{independence number} of $G$, that is the size of the largest independent set. Let $K_{n}$ be a clique on $n$ vertices and $\chi(G)$ the \emph{chromatic number} of $G$. 

For any set $X\subseteq V(G)$, let $G[X]$ denote the subgraph of $G$ induced by $X$.
Given disjoint vertex sets $A$ and $B$, we say that $A$ is \emph{complete} to $B$ if  each vertex in $A$ is adjacent to all vertices in $B$. Analogously, $A$ is \emph{anti-complete} to $B$ if there exists no edge between $A$ and $B$.
We use $N_{G}(a)$ to denote the set of neighbors of vertex $a$ in graph $G$ and set $N_{G}[a]:=N_{G}(a)\cup \{a\}$. When there is no danger of confusion, the subscript $G$ is omitted.

If a graph isomorphic to $H$ can be obtained from a graph $G$ by deleting vertices or edges and contracting edges, then $H$ is a \emph{minor} of $G$, written by $G\succeq_m H$.
Hadwiger's Conjecture \cite{HU} says that every graph $G$ without loops contains the complete graph $K_{\chi(G)}$ as a minor. This conjecture has been proven to be true for graphs with a chromatic number of $\chi(G)\leq 6$, but is likely difficult to prove for larger values, given that proofs for the cases of $\chi(G)=5$ and $\chi(G)=6$ already depend on the Four Color Theorem.


Hadwiger's conjecture remains unsolved for a specific class of graphs, namely those with independence number of at most two. The significance of this class was first pointed out by Duchet and Meyniel \cite{PH}  and Mader independently, and later highlighted in Seymour's recent survey \cite{P}.
Plummer, Stiebitz, and Toft \cite{MM} showed that, for this class of graphs, the conjecture can be restated as follows: every $n$-vertex graph $G$ with  $\alpha(G) \leq 2$ contains a minor isomorphic to $K_{\lceil \frac{n}{2} \rceil}$. 
Woodall \cite{W} in 2001 (and independently Seymour \cite{AN}) proposed the following conjecture. 

\begin{conjecture}{\rm(\cite{W,AN})}\label{conj4}
For any positive integer $\ell$ with $\ell< \chi(G)$, a graph $G$ contains a $K_{\ell, \chi(G)-\ell}$-minor.
\end{conjecture}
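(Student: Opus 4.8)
The plan is to treat Conjecture~\ref{conj4} as a relaxation of Hadwiger's conjecture and to build the complete bipartite minor from a minimum-degree condition obtained by passing to a critical subgraph. Write $k:=\chi(G)$. Since $K_{\ell,k-\ell}$ is a subgraph of $K_{k}$, any $K_{k}$-minor would suffice; the point is that only the cross-adjacencies of the bipartition are required, so one should hope to succeed well below the density that forces a full clique minor. First I would replace $G$ by a $k$-vertex-critical subgraph $H$: every minor of $H$ is a minor of $G$, so it suffices to find a $K_{\ell,k-\ell}$-minor in $H$, and criticality yields $\delta(H)\ge k-1$, the quantitative input for everything that follows. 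The case $\ell=1$ is immediate, since $K_{1,k-1}$ is a star and any vertex of $H$ together with $k-1$ of its neighbours realises it as a subgraph; I would then try to increase $\ell$ one step at a time, the difficulty at each step being to create the new cross-edges demanded when a branch set is moved to the smaller side.

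Concretely, the object to construct is a family of pairwise disjoint connected branch sets $B_{1},\dots,B_{\ell}$ and $C_{1},\dots,C_{k-\ell}$ in $H$ with every $B_{i}$ adjacent to every $C_{j}$. In the unbalanced regime, where $\ell$ is bounded, I would invoke the extremal theory of complete bipartite minors (of Kostochka--Thomason type), which forces $K_{s,t}$ once the average degree passes a threshold that is linear in $t$ for fixed $s$; combined with $\delta(H)\ge k-1$ this should already dispose of small $\ell$. A natural first target beyond this is the class with $\alpha(G)\le 2$ that motivates the paper, where $k\ge\lceil n/2\rceil$ forces $H$ to be dense and the branch sets can be routed directly.

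The balanced regime is where the approach must go beyond counting edges, and this is the main obstacle. Since a $K_{\ell,k-\ell}$-minor with $\ell$ proportional to $k$ already yields a $K_{\Theta(k)}$-minor, producing it is at least as hard as forcing a clique minor of order $\Theta(k)$, whose extremal threshold in average degree is of order $k\sqrt{\log k}$, whereas criticality supplies only degree $k-1$; the $\sqrt{\log k}$ deficit cannot be recovered by density alone. To close it I would exploit the coloring structure rather than the degree sequence: in a $k$-critical graph every edge $e$ admits a $(k-1)$-coloring of $H-e$ identifying its ends, and the resulting Kempe chains can be assembled into highly connected hubs to serve as one side of the bipartition, with a linkage (rooted-minor) argument routing them to the other side. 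Securing this step in full generality is exactly what keeps Conjecture~\ref{conj4} open, so the realistic program is to establish it first for bounded $\ell$ and for $\alpha(G)\le 2$, and to regard the general balanced case as the principal remaining gap.
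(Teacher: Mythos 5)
The statement you were asked about is not a theorem of the paper: Conjecture~\ref{conj4} is the open Woodall--Seymour conjecture, the paper states it only as motivation, and it contains no proof of it to compare yours against. What the paper actually proves is the \emph{immersion} analogue for graphs with $\alpha(G)=2$ (Theorem~\ref{thm4}), by a minimal-counterexample argument (Claims 2.1--2.4) showing that a smallest counterexample would have no induced $4$-cycle, and then invoking Quiroz's Theorem~\ref{lem1}; the \emph{minor} version for $\alpha(G)\le 2$ is not proved here either but attributed to \cite{RZ}. So your proposal cannot be ``essentially the same approach'' or ``a different route'': it is, by its own admission, a research program rather than a proof, and it would be wrong to present it as establishing the statement.

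Beyond that framing issue, the concrete gaps in your program are real and worth naming precisely. First, even the ``unbalanced regime'' does not close as you suggest: criticality gives only $\delta(H)\ge k-1$, while the Kostochka--Prince-type thresholds for forcing a $K_{s,t}$-minor are roughly $t+3s$ in average degree (for $t$ large relative to $s$); with $s=\ell$, $t=k-\ell$ this is about $k+2\ell>k-1$, so the extremal machinery falls short by $\Theta(\ell)$ already for fixed $\ell\ge 2$ --- this is exactly why the conjecture is nontrivial even there. Second, your balanced-regime analysis is correct in spirit (contracting a perfect matching in $K_{\ell,\ell}$ yields $K_\ell$, so a balanced biclique minor of order $\Theta(k)$ would force a clique minor of order $\Theta(k)$, well beyond what chromatic number $k$ is currently known to force), but the proposed fix --- assembling Kempe chains of the $(k-1)$-colorings of $H-e$ into ``highly connected hubs'' and routing a rooted linkage --- is stated without any lemma that could carry it out, and no such mechanism is known; this is precisely the missing idea, not a deferred technicality. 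Finally, note that for the one subcase you flag as tractable, $\alpha(G)\le 2$, the proof in \cite{RZ} does not proceed by density/routing as you sketch, and the present paper's method for the immersion version (Claim~\ref{claim1}-style common-neighborhood bounds plus the $C_4$-free reduction to Theorem~\ref{lem1}) is structural rather than extremal; if you want a provable target, that minimal-counterexample template is the productive direction, whereas the general Conjecture~\ref{conj4} must remain labeled as open.
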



\noindent Chen and Deng \cite{RZ} recently showed that Conjecture \ref{conj4} holds for graphs with independence number two. 

This article explores an alternative ordering of graphs called the \emph{immersion order}, which is defined through the use of \emph{edge lifts}. When two edges $uv$ and $vw$ are adjacent in a graph $G$ but $u\neq w$ and $uw\notin E(G)$, a lift can be performed by deleting $uv$ and $vw$ and adding the edge $uw$. A graph $H$ is said to be {\em immersed} in $G$, denoted by $G\succeq_iH$, if $H$ can be obtained from $G$ through a series of lifts of edges and deletions of vertices and/or edges. Another way of stating this is that $G$ contains $H$ as an immersion.

This definition is equivalent \cite{FM} to the existence of an injective function $\phi: V(H)\rightarrow V(G)$ that satisfies the following two conditions:

(1) For any $uv\in E(H)$, there is a path in $G$ denoted as $P_{uv}$ that connects the vertices $\phi(u)$ and $\phi(v)$.

(2) The paths $\{P_{uv} : uv\in E(H)\}$ are pairwise edge-disjoint.

\noindent We call the vertices in $\phi(V(H))$ \emph{branch vertices} of the immersion.

Notice that if we modify the second condition of the definition of immersion to require that the paths $P_{uv}$ be internally vertex disjoint rather than just edge disjoint, then $G$ is said to contain $H$ as a topological minor. It follows that if $G$ contains $H$ as a topological minor, then $G$ also contains $H$ as an immersion. It should be emphasized that the minor order and the immersion order are not comparable.

In recent years, the immersion order has become a subject of significant interest in the field of graph theory. Much of this attention has been focused on the conjecture proposed by Lescure and Meyniel \cite{FH} (see also \cite{FM}), which is an immersion analogue of Hadwiger's conjecture.

\begin{conjecture}\label{conj1}
(\cite{FH}) Every graph $G$ contains $K_{\chi(G)}$ as an immersion.
\end{conjecture}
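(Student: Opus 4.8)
The plan is to attack Conjecture~\ref{conj1} through the independence-number-two case, which Vergara's equivalence (quoted in the abstract) shows to be decisive: it suffices to prove that every $n$-vertex graph $G$ with $\alpha(G)=2$ immerses $K_{\lceil n/2\rceil}$, so I would assume $\alpha(G)=2$ once and for all. The structural engine is that $\alpha(G)=2$ is equivalent to $\overline{G}$ being triangle-free; concretely, whenever $u,v$ are non-adjacent in $G$, the triple $\{u,v,w\}$ is never independent, so $N(u)\cup N(v)=V(G)\setminus\{u,v\}$. Thus non-adjacent pairs dominate the graph, which is exactly the density one needs to route paths. I would set $k=\lceil n/2\rceil$, single out $k$ vertices as branch vertices, and treat the remaining $\lfloor n/2\rfloor$ vertices as a reservoir of internal vertices for the routing.

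Next I would try to realise the $\binom{k}{2}$ required paths as follows. For each adjacent pair of branch vertices I would use the joining edge directly, spending nothing further. For each non-adjacent pair $u,v$ I would route a short path through a common neighbour, or failing that through a reservoir vertex, using the domination property above to guarantee such a connector exists. The whole construction would then be phrased as an edge-disjoint routing (equivalently an integral multicommodity flow) problem and attacked by induction on $n$: delete a carefully chosen matched pair, obtain a $K_{k-1}$ immersion from the induction hypothesis on the smaller graph, and extend it by inserting one new branch vertex together with its $k-1$ connecting paths, rerouted through the two deleted vertices. As an auxiliary tool I would bring in the complete bipartite immersion theorem recorded in the abstract: for every bipartition $s+t=k$ it already supplies a $K_{s,t}$ immersion, and one may hope to assemble a clique by overlaying bipartite immersions on complementary vertex classes.

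The hard part --- and, candidly, the reason Conjecture~\ref{conj1} remains open --- is the simultaneous edge-disjoint routing of all $\binom{k}{2}=\Theta(n^2)$ paths of a clique while keeping congestion under control. The complete bipartite relaxation is tractable precisely because its $st$ paths factor through a common hub, so each edge is charged only a bounded number of times; a clique admits no such bipartition, and naive overlaying of bipartite immersions reuses edges. I therefore expect the crux to be a congestion/edge-capacity estimate: showing that the dense structure forced by $\alpha(G)=2$ provides enough edge-disjoint connectors for every non-adjacent pair at once. No clean flow-integrality statement is known for the clique target, and bridging that gap --- rather than any single lemma --- is where I would anticipate the real difficulty to lie.
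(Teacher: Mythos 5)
There is a genuine gap, and it is the whole proof: the statement you were given is Conjecture~\ref{conj1}, the Lescure--Meyniel conjecture, which is open. The paper does not prove it and does not claim to --- it proves only Theorem~\ref{thm4}, the biclique immersion statement for graphs with $\alpha(G)=2$, via a minimal-counterexample analysis that finishes with Quiroz's Theorem~\ref{lem1}. Your proposal, by its own admission, defers exactly the step where all the difficulty sits: the simultaneous edge-disjoint routing of the $\binom{k}{2}$ clique paths. A sketch that sets up notation, proposes an induction, and then says the crux is ``a congestion/edge-capacity estimate'' for which ``no clean flow-integrality statement is known'' is a research programme, not a proof, so nothing has been established. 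The specific mechanisms you float also do not work as stated: overlaying the bipartite immersions supplied by Theorem~\ref{thm4} on complementary vertex classes gives no edge-disjointness across the overlays (the theorem produces each $K_{s,t}$ immersion separately, with no control on which edges it uses), and in your induction step the $k-1$ new paths ``rerouted through the two deleted vertices'' would all have to pass through a bounded edge supply at those two vertices, with no argument that this supply suffices or that lifts can resolve the conflicts.

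Separately, your opening reduction is misstated. Vergara's equivalence is a statement \emph{within} the class $\alpha(G)\leq 2$: Conjecture~\ref{conj1} \emph{restricted} to graphs with independence number at most two is equivalent to Conjecture~\ref{conj3}, i.e., to immersing $K_{\lceil n/2\rceil}$ in every such $n$-vertex graph. It does not say the $\alpha(G)=2$ case is ``decisive'' for the full conjecture, so you may not ``assume $\alpha(G)=2$ once and for all'': proving Conjecture~\ref{conj3} would settle only the independence-number-two case (and even that remains open --- the best known bounds are $K_{\lceil n/3\rceil}$ by Vergara and $K_{2\lfloor n/5\rfloor}$ by Gauthier, Le, and Wollan, as the paper records), while the general conjecture would still be untouched for, say, graphs of large independence number and large chromatic number. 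Your structural observation that $\alpha(G)=2$ forces $N(u)\cup N(v)=V(G)\setminus\{u,v\}$ for non-adjacent $u,v$ is correct and is indeed the kind of fact the paper exploits in its proof of Theorem~\ref{thm4}, but it is far from enough to produce the clique immersion.
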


When the chromatic number $\chi(G) \leq 4$, Conjecture \ref{conj1} holds, due in part to the truth of Haj\'{o}s' Conjecture for these cases \cite{GA}. For larger chromatic numbers, the conjecture has been established for cases $5 \leq \chi(G) \leq 7$, by both Lescure and Meyniel \cite{FH} and DeVos, Kawarabayashi, Mohar, and Okamura \cite{MK}.
However, beyond these cases, there are few non-trivial instances where the conjecture is known to hold. For example, it has been proven that the conjecture is true for the line graphs of simple graphs, but it remains unsolved for the line graphs of general multigraphs \cite{MJ}. 
When we restrict Conjecture \ref{conj1} to graphs with independence number of at most two, Vergara proposed the following conjecture.



\begin{conjecture}\label{conj3}(\cite{V})
Every graph $G$ on $n$ vertices with $\alpha(G)\leq2$ contains $K_{\lceil \frac{n}{2}\rceil}$ as an immersion.
\end{conjecture}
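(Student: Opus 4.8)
\section*{Proof proposal}

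The plan is to attack Conjecture \ref{conj3} head-on, building the branch set from the matching structure of the complement and then routing \emph{all} pairwise paths by an explicit construction. Since $\alpha(G)\le 2$, the complement $\overline G$ is triangle-free, and we may assume $G\neq K_n$ so that $\overline G$ has an edge. Fix a maximum matching $M=\{a_1b_1,\dots,a_kb_k\}$ of $\overline G$ and let $U=V(G)\setminus\bigcup_i\{a_i,b_i\}$ be the uncovered vertices. By maximality of $M$, the set $U$ is independent in $\overline G$, i.e.\ a clique in $G$. I would take as branch set $B=U\cup\{b_1,\dots,b_k\}$ and reserve $A=\{a_1,\dots,a_k\}$ as auxiliary (``helper'') vertices; then $|B|=|U|+k=(n-2k)+k=n-k\ge\lceil n/2\rceil$ because $k\le\lfloor n/2\rfloor$, so immersing $K_{|B|}$ with branch set $B$ suffices.

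First I would record two consequences of $\alpha(G)\le 2$. For any vertex $w$, its non-neighbours are pairwise adjacent (else three of them, with $w$, form an independent triple), so they induce a clique. In particular $U$ has no non-edges, and any non-edge of $B$ has at least one endpoint among $b_1,\dots,b_k$. Moreover, if two matched representatives satisfy $b_i\not\sim b_j$, then applying $\alpha(G)\le2$ to $\{b_i,b_j,a_i\}$, where $b_i\not\sim a_i$, forces $a_i\sim b_j$, and symmetrically $a_j\sim b_i$: the adjacency $b_i$ lacks is recovered by the other pair's partner. I then route the demands in two families. Every adjacent pair of $B$ is joined by its single edge, all of which lie inside $G[B]$. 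Every non-adjacent pair $b_ib_j$ with $a_i\sim a_j$ is routed by the \emph{partner-swap path} $b_i\,a_j\,a_i\,b_j$, which is valid by the facts just noted. A direct check (each such path consumes the partner edge $a_ia_j$, unique to the pair, together with the two cross edges $b_ia_j$ and $a_ib_j$ determined by the ordered pair) shows that these detours are pairwise edge-disjoint and, since they use only $A$--$A$ and $A$--$B$ edges, disjoint from the $B$--$B$ direct edges. Thus the whole immersion is obtained for free whenever $\overline G$ has no obstructing extra non-edges among partners; for instance, if $\overline G$ is itself a matching, every branch pair is adjacent and $K_{\lceil n/2\rceil}$ immerses at once.

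Two residual families of non-edges survive this clean construction. The first consists of non-adjacent representative pairs $b_ib_j$ for which $a_i\not\sim a_j$; together with $a_ib_i,\,b_ib_j,\,b_ja_j$ these form an induced four-cycle $a_i\,b_i\,b_j\,a_j$ of $\overline G$, and the partner-swap path is unavailable since its middle edge $a_ia_j$ is missing. The second consists of non-edges $u\,b_i$ with $u\in U$; here $u\sim a_i$ but $a_i\not\sim b_i$, so the partner again fails to serve as a length-two or length-three midpoint, and such a pair must be routed through the clique $U$ or through other external vertices. For both families I would look for replacement paths of small length through vertices not yet saturated, trying to certify enough free edges via the density of $G$ (the triangle-free complement forces $G$ to be edge-dense).

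The hard part, and the reason this outruns the complete bipartite statement recorded in the excerpt, is to realise \emph{all} of these residual routings simultaneously with pairwise edge-disjoint paths. The four-cycle case is genuinely rigid: re-matching the cycle along its other perfect matching $\{b_ib_j,\,a_ia_j\}$ simply reproduces a non-edge with non-adjacent partners, so no local relabelling removes the obstruction, and short detours for different four-cycles compete for the same scarce helpers in $A$ and for the edges of the clique $U$. When a representative has many non-neighbours in $B$ (an $\alpha(G)\le2$ graph permits this, the non-neighbours forming a large clique), the number of competing demands approaches $|B|$ while only $k=\lfloor n/2\rfloor$ helpers exist, so a naive system-of-distinct-representatives argument for the detours fails. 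To push through, I would abandon fixed-length detours and instead invoke a global edge-disjoint linkage argument, for example an Eulerian orientation of $G$ combined with Mader--Lov\'asz splitting-off, to certify the required paths in the residual instance. Making this global routing close in full generality is precisely the step I expect to be the genuine obstacle: it is exactly where a complete bipartite demand (which never needs any intra-class path) is easy and the clique demand $K_{\lceil n/2\rceil}$ is not, and it is why Conjecture \ref{conj3} remains open while only the bipartite relaxation is currently established.
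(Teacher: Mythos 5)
You should be aware at the outset that the statement you set out to prove is Conjecture \ref{conj3}, which is open: the paper contains no proof of it, and instead proves only the bipartite relaxation, Theorem \ref{thm4}, using Quiroz's Theorem \ref{lem1} (the conjecture restricted to graphs with no induced $C_4$) as the strongest known partial result. Your proposal, to its credit, is honest about this and does not claim to close the argument. The parts you do carry out are correct: since $\alpha(G)\leq 2$ the complement $\overline{G}$ is triangle-free, the vertices $U$ uncovered by a maximum matching $M$ of $\overline{G}$ form a clique of $G$, the branch set $B=U\cup\{b_1,\dots,b_k\}$ has size $n-k\geq\lceil n/2\rceil$, the forced adjacencies $a_i\sim b_j$ and $a_j\sim b_i$ whenever $b_i\not\sim b_j$ follow from $\alpha(G)\leq 2$, and your partner-swap paths $b_i$-$a_j$-$a_i$-$b_j$ are indeed pairwise edge-disjoint and disjoint from the direct $B$--$B$ edges, since the edge $a_ia_j$ identifies the pair and each cross edge $b_ia_j$ is used only by the pair $\{i,j\}$.

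The genuine gap is exactly where you place it, and it is not a technicality but the entire difficulty. The two residual families --- non-adjacent pairs $b_ib_j$ with $a_i\not\sim a_j$ (an induced $C_4$ of $\overline{G}$, i.e.\ an induced $2K_2$ of $G$), and non-edges $ub_i$ with $u\in U$ --- are never routed; the appeal to Eulerian orientations and Mader--Lov\'{a}sz splitting-off is a direction rather than an argument, since splitting-off preserves local edge-connectivity whereas what you need is a simultaneous integral routing of up to $\Theta(n^2)$ \emph{specified} terminal pairs by pairwise edge-disjoint paths, a multicommodity demand that edge-connectivity bounds alone do not certify (and no connectivity hypotheses are verified in any case). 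Note also that your obstruction is genuinely different from the one the paper circumvents: Theorem \ref{lem1} forbids an induced $C_4$ in $G$, while your bad configuration is an induced $C_4$ in $\overline{G}$, so the paper's shortcut is unavailable to you; and the paper's own proof of Theorem \ref{thm4} succeeds precisely because bipartite demands require no paths inside the size-$\ell$ class, which is where your clique demand concentrates its residual pairs. As it stands, your construction proves the conjecture only under strong extra hypotheses (e.g.\ when every non-adjacent representative pair has adjacent partners, such as when $\overline{G}$ is a matching), which is weaker than the known bounds of Vergara ($K_{\lceil n/3\rceil}$) and Gauthier--Le--Wollan ($K_{2\lfloor n/5\rfloor}$). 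One concrete avenue you leave unexplored: a non-edge $ub_i$ yields the alternative maximum matching $M-a_ib_i+ub_i$ in $\overline{G}$, so one could try to choose $M$ extremally to minimize the residual demand set before attempting any global routing.
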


Vergara \cite{V} showed that if Conjecture \ref{conj1} is restricted to graphs with independence number of at most two, then Conjecture \ref{conj1} and Conjecture \ref{conj3} become equivalent.
In the same paper, she showed that every $n$-vertex graph $G$ with $\alpha(G)\leq2$ contains $K_{\lceil \frac{n}{3}\rceil}$ as an immersion, which provides strong evidence for the conjecture.
Lately, Gauthier, Le, and Wollan \cite{GT} were able to improve Vergara's result to $K_{2{\lfloor \frac{n}{5}\rfloor}}$. 
Recently, Botler et al. showed the following result true. 

\begin{theorem} \label{thm4}(\cite{FA}) Let $G$ be an $n$-vertex graph with $\alpha(G)=2$. For any positive integer $\ell$ with $2\ell \leq \lceil\frac{n}{2}\rceil$, we have $G \succeq_{i} K_{\ell,\lceil\frac{n}{2}\rceil- \ell}$.
\end{theorem}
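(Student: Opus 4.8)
The plan is to exploit the fact that $\alpha(G)=2$ is equivalent to the complement $\overline{G}$ being triangle-free, and to build the immersion using short (length $\le 2$) paths routed through a carefully chosen set of ``helper'' vertices. Write $m=\lceil n/2\rceil$. First I would fix a maximum matching $M=\{x_1y_1,\dots,x_ky_k\}$ of $\overline{G}$ and let $R=V(G)\setminus V(M)$ be the set of unmatched vertices. Since $M$ is maximum, $R$ is independent in $\overline{G}$, i.e.\ a clique in $G$, and a short augmenting-path argument shows that for each $i$ at most one of $x_i,y_i$ has a $\overline{G}$-neighbour in $R$ (two such neighbours would yield an augmenting path, and a common one a triangle in $\overline{G}$); relabelling, I may assume every $y_i$ is $G$-complete to $R$. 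I then take as branch set $W=\{y_1,\dots,y_k\}\cup R'$, where $R'\subseteq R$ is chosen with $|R'|=\lceil|R|/2\rceil$ so that $|W|=m$, and let the helper set be $H=V(G)\setminus W$. The point of this choice is that the only non-edges of $G$ inside $W$ lie among the $y_i$'s, so they form a triangle-free graph $F$; moreover every vertex of $R\setminus R'\subseteq H$ is $G$-adjacent to all of $\{y_1,\dots,y_k\}$.

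Next I would split $W=A\sqcup B$ with $|A|=\ell$, $|B|=m-\ell$, filling $A$ first with vertices of $R'$ and then with low-$F$-degree $y_i$'s, and realise $K_{\ell,m-\ell}$ as follows. Every pair $ab$ with $a\in A$, $b\in B$ and $ab\in E(G)$ is realised by the single edge $ab$; these paths are automatically edge-disjoint. Each remaining pair is a non-edge $y_iy_j$ of $F$ crossing $A,B$, and I route it by a path $y_i\,h\,y_j$ of length two through a helper $h\in H$ that is a common $G$-neighbour of $y_i$ and $y_j$. Since a helper lies outside $A\cup B$, such a $2$-path shares no edge with any direct edge, and two $2$-paths through the same helper $h$ are edge-disjoint exactly when the demands assigned to $h$ share no endpoint. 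Thus the whole routing amounts to a proper edge-colouring of the bipartite demand graph $F[A,B]$ in which the colour used on $y_iy_j$ must be a common neighbour in $H$; by Galvin's list edge-colouring theorem (for bipartite graphs lists of size $\Delta$ suffice), it is enough to show that every demand $y_iy_j$ has at least $\Delta(F[A,B])$ common $G$-neighbours inside $H$.

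The heart of the argument, and the step I expect to be the main obstacle, is this last counting estimate. Here the hypothesis $\alpha(G)=2$ gives two useful facts: first, no vertex is $G$-non-adjacent to both ends of a non-edge $y_iy_j$ (such a vertex would complete an independent triple), so the neighbourhoods $N_{\overline G}(y_i)$ and $N_{\overline G}(y_j)$ are disjoint, forcing $\deg_{\overline G}(y_i)+\deg_{\overline G}(y_j)\le n$ and hence $y_i,y_j$ to have exactly $n-\deg_{\overline G}(y_i)-\deg_{\overline G}(y_j)$ common $G$-neighbours; second, each set $N_{\overline G}(y_i)$ is a clique of $G$. The constraint $2\ell\le m$ enters because it bounds the demand-degrees on the $B$-side by $\ell$, keeping $\Delta(F[A,B])$ small, while the universal helpers $R\setminus R'$ supply a guaranteed pool of common neighbours. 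The delicate point is that when some $y_i$ has very large $\overline{G}$-degree its common-neighbour pool shrinks; I would dispose of this by a dichotomy on the clique number $\omega(G)$: if $\omega(G)\ge m$ a maximum clique already contains all $m$ branch vertices and $K_{\ell,m-\ell}$ embeds directly, so I may assume $\omega(G)<m$, which caps every $|N_{\overline G}(y_i)|$ below $m$. Combining this cap with the disjointness $N_{\overline G}(y_i)\cap N_{\overline G}(y_j)=\varnothing$ and the pool of universal helpers is the route by which I would push the list sizes above $\Delta(F[A,B])$ and complete the colouring.
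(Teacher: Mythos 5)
Your plan is genuinely different from the paper's proof (which takes a minimal counterexample, reduces to an $\alpha$-critical one, shows via common-neighbourhood counting that it has no induced $4$-cycle, and then quotes Quiroz's Theorem~\ref{lem1}), and several of its ingredients are sound: the matching/augmenting-path argument showing that each matched pair has an endpoint $G$-complete to $R$, the arithmetic $|W|=k+\lceil|R|/2\rceil=\lceil n/2\rceil$, and the observation that routing $2$-paths through helpers in $H$ is exactly a list edge-colouring problem on the bipartite demand graph, so Galvin applies once lists have size $\Delta(F[A,B])$. The gap is the step you yourself flag as the main obstacle, and it is not merely an unfinished computation: under the freedom your construction allows (arbitrary maximum matching, labelling constrained only by $R$, splitting $A$ by low $F$-degree), the required estimate is false, and in fact \emph{no} split $A\sqcup B$ admits any $2$-path routing at all, so no amount of cleverness in the colouring step can save it.

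Here is a concrete instance. Let $\overline{G}$ be the blow-up of $C_5$ with parts $V_1,\dots,V_5$, where $|V_1|=|V_2|=s$ (with $s\geq 28$ even) and $|V_3|=|V_4|=|V_5|=4$, consecutive parts complete bipartite in $\overline{G}$ and all other pairs empty. Then $\overline{G}$ is triangle-free, $n=2s+12$, $m=s+6$, and $\omega(G)=\alpha(\overline{G})=s+4<m$, so your clique dichotomy does not trigger; moreover $\overline{G}$ has a perfect matching ($s-2$ edges inside $V_1\times V_2$ and two edges inside each other consecutive pair), so $R=\emptyset$ and there are no universal helpers. Your rules permit the labelling in which half of the $V_1$--$V_2$ matching edges put their $V_1$-endpoint into $W$ and half their $V_2$-endpoint, giving sets $W_1=W\cap V_1$, $W_2=W\cap V_2$ of size $(s-2)/2$ each, with $|W\setminus(W_1\cup W_2)|=8$. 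The common $G$-neighbourhood of any pair $u\in V_1$, $v\in V_2$ is exactly $V_4$, of size $4$; since demands sharing a vertex must use distinct helpers, any vertex with more than $4$ crossing demands into the opposite part is unroutable by $2$-paths, whether or not helpers are confined to $H$. Now take $\ell=(s+6)/2$. Since $|A\cap(W_1\cup W_2)|\geq\ell-8>0$, say $A\cap W_1\neq\emptyset$; routability forces $|W_2\setminus A|\leq 4$, hence $A\cap W_2\neq\emptyset$, hence $|W_1\setminus A|\leq 4$, hence $|A|\geq s-10>\ell$, a contradiction (the case $A\cap W_1=\emptyset\neq A\cap W_2$ fails even faster). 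Yet the theorem is true here, and even your own framework works if one instead chooses the labelling with $W\supseteq V_2$, which kills all demands. So the missing idea is precisely a rule for choosing the matching, the labelling and the split so that demand degrees stay below pool sizes; nothing in the proposal produces such a rule, and supplying one is essentially the whole content of the theorem. (A secondary, smaller issue: even when routing exists, your criterion ``list size $\geq\Delta(F[A,B])$ for the whole demand graph'' can fail while per-pair routing succeeds, e.g.\ in balanced $C_5$ blow-ups, so the reduction should be applied pairwise rather than globally.)
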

By a result in  \cite{FA}, Theorem \ref{thm4} implies that each graph $G$ with $\alpha(G)=2$ contains $K_{\ell,\chi(G)- \ell}$ as an immersion for any positive integer $\ell$ with $\ell< \chi(G)$. That is, the immersion version of Conjecture \ref{conj4} holds  for graphs with independence number two.
In this paper, we give a much simpler proof of Theorem \ref{thm4}. Our strategy is to first show that each counterexample to Theorem \ref{thm4} does not contain an induced $4$-vertex-cycle, 
and then use Quiroz's result, namely Theorem \ref{lem1}, to take a shortcut.

\begin{theorem} \label{lem1}(\cite{D}) Let $G$ be a $n$-vertex graph with $\alpha(G)\leq 2$.
If $G$  does not contain an induced $4$-vertex-cycle, 
then $G$  contains $K_{\lceil \frac{n}{2}\rceil}$ as an immersion.
\end{theorem}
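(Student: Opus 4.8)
The plan is to argue in the complement $\overline{G}$. The two hypotheses translate cleanly: $\alpha(G)\le 2$ says exactly that $\overline{G}$ is triangle-free, and an induced $C_4$ in $G$ is precisely an induced $2K_2$ in $\overline{G}$, so the no-induced-$C_4$ condition says $\overline{G}$ has no induced $2K_2$. The first step is a structural description of such complements: a connected triangle-free graph with no induced $2K_2$ is either complete bipartite or a blow-up of $C_5$ (replace each vertex of $C_5$ by a nonempty stable set and join the sets of adjacent vertices completely). The argument fixes an edge $uv$, partitions the remaining vertices by their adjacency to $\{u,v\}$ into $A=N(u)\setminus v$, $B=N(v)\setminus u$, and $D$ (adjacent to neither), and then repeatedly invokes the forbidden $2K_2$: any neighbour of a vertex of $D$ must lie in $A\cup B$, and $D$ must be stable, after which triangle-freeness forces the pentagon (or bipartite) pattern. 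Since two components each containing an edge would already form an induced $2K_2$, $\overline{G}$ has at most one nontrivial component; hence $\overline{G}$ is a set $I$ of isolated vertices together with one component that is complete bipartite or a blow-up of $C_5$.

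Translating back, the vertices of $I$ are universal in $G$ and form a clique, while the nontrivial component becomes, in $G$, either two disjoint cliques (the complete bipartite case) or a ``cliqued pentagon'': five cliques $U_1,\dots,U_5$ arranged in a $5$-cycle, with consecutive cliques completely joined and non-consecutive ones anti-complete. I first dispose of the easy reductions. A universal vertex may be appended to any immersion that uses only edges of the nontrivial component, since its incident edges are then unused; thus it suffices to immerse $K_{\lceil N/2\rceil}$ inside the nontrivial component on its $N$ vertices, and one checks that $\lceil N/2\rceil$ branch vertices there together with the $|I|$ universal vertices give at least $\lceil n/2\rceil$. In the complete bipartite case $G$ contains a clique of size at least $\lceil N/2\rceil$ outright (the larger of the two cliques holds at least half the vertices), so $G\succeq_i K_{\lceil N/2\rceil}$ trivially.

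The substance is the cliqued pentagon, where the only non-adjacent vertex pairs lie in classes at pentagon-distance two; these are the pairs whose connecting paths must be routed. My plan is to take branch vertices from three suitably chosen consecutive classes $U_i,U_{i+1},U_{i+2}$, join all adjacent branch pairs by direct edges, and route each distance-two pair $x\in U_i$, $y\in U_{i+2}$ along a length-three path $x\to U_{i+4}\to U_{i+3}\to y$ through the two remaining classes, which serve purely as routers. The key point is edge-disjointness: identifying the four relevant classes with a common index set and routing the pair $(x,y)$ through the vertex labelled $y$ in $U_{i+4}$ and the vertex labelled $x$ in $U_{i+3}$ uses each edge of the three connecting bundles exactly once, so all routing paths are mutually edge-disjoint and disjoint from the direct edges. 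In the balanced case this already immerses a clique on all three classes.

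The main obstacle is to make this work for arbitrary, possibly very unbalanced, class sizes while still reaching $\lceil N/2\rceil$ branch vertices: the router classes must be large enough to carry the required edge-disjoint paths (a flow/Latin-square condition relating $|U_i|,|U_{i+2}|$ to $|U_{i+3}|,|U_{i+4}|$), and for some size distributions no single triple of full classes simultaneously satisfies this condition and the count $\ge\lceil N/2\rceil$. I expect to handle this by choosing the consecutive triple greedily by size and, when necessary, promoting only part of a class to branch vertices, using the remainder together with the far classes as additional routing capacity; this reduces the whole question to a max-flow feasibility check between the ``demand'' bundles and the ``router'' bundles. Verifying that this feasibility always holds under $\sum_i|U_i|=N$ — equivalently, that the cliqued pentagon immerses $K_{\lceil N/2\rceil}$ for every profile of class sizes — is the crux, and the step I expect to demand the most care.
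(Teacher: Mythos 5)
You should first note that the paper itself contains no proof of this statement: it is Quiroz's theorem, quoted from \cite{D} and used as a black box, so your attempt has to stand entirely on its own. As written it has two genuine gaps. The first is that your structure lemma is false: a connected triangle-free graph with no induced $2K_2$ need not be complete bipartite or a blow-up of $C_5$ with nonempty classes. The path $P_4$, and more generally every chain graph (a bipartite graph with nested neighbourhoods, e.g.\ the half-graph), is connected, triangle-free and $2K_2$-free but is neither. Your own derivation breaks exactly at the bipartite step: for $a\in A$, $b\in B$ with $ab\notin E(\overline{G})$, the four vertices $u,v,a,b$ do \emph{not} induce a $2K_2$, because the edge $uv$ is present among them, so nothing forces $A$ to be complete to $B$. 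Consequently your ``easy'' case, which rests on $G$ being two disjoint cliques, does not cover what actually occurs. This part is repairable --- if $\overline{G}$ is bipartite, then by K\"{o}nig's theorem $\alpha(\overline{G})=n-\mu(\overline{G})\geq \lceil\frac{n}{2}\rceil$, so $G$ contains $K_{\lceil\frac{n}{2}\rceil}$ as a subgraph outright --- but that argument is not in your proposal, and the triangle-free $2K_2$-free structure you need is ``bipartite or blow-up of $C_5$'', not ``complete bipartite or blow-up of $C_5$''.

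The second gap is the decisive one: the cliqued-pentagon case, which you correctly identify as the substance of the theorem, is not proved. You verify edge-disjoint routing only for the balanced profile and then explicitly defer the general case to an unverified ``max-flow feasibility check'', conceding it is the crux; a conceded crux means the proposal is a plan, not a proof. Moreover the difficulty is real and your suggested greedy-by-size choice of a consecutive triple demonstrably fails. Take the profile $(k,1,k,1,1)$ with $N=2k+3$, so $\lceil\frac{N}{2}\rceil=k+2$. The largest consecutive triple is $(U_1,U_2,U_3)$ with $2k+1$ vertices, but any choice of $k+2$ branch vertices inside it with $a$ vertices from $U_1$ and $b$ from $U_3$, $a+b=k+1$, generates $ab$ non-adjacent pairs whose routes must pass through the far side, where the single edge between the singleton classes $U_4$ and $U_5$ carries only one path and detours through $u_2$ collide with the edges $xu_2$, $u_2y$ already reserved as direct connections for branch pairs; a short count shows the available spare capacity is $O(k)$ while no split avoids the demand exceeding it for large $k$. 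The immersion exists --- e.g.\ the triple $(U_5,U_1,U_2)$ of sizes $1,k,1$ gives exactly $k+2$ branch vertices with a single non-adjacent pair $(u_5,u_2)$, routed as $u_5$-$u_4$-$y'$-$u_2$ --- but this shows the feasible triple depends on the profile in a way your sketch does not control, and proving that some choice of (possibly partial, possibly differently located) classes always reaches $\lceil\frac{N}{2}\rceil$ for every profile $(n_1,\dots,n_5)$ is precisely the case analysis you have omitted. Until that is supplied, the theorem is not established by your argument.
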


In fact, Chen and Deng in \cite{RZ} showed that each graph $G$ with $\alpha(G)\leq2$ contains a $K^{\ell}_{\ell,\chi(G)- \ell}$-minor for each positive integer $\ell$ with $2\ell \leq \chi(G)$, where  $K_{m,n}^m$ is obtained from the disjoint union of a $K_m$ and an independent set on $n$ vertices by adding all of the possible edges between them. Considered of this fact, we conjecture that a similar result also holds for immersion. This conjecture is trivial for $\ell=1$ as $\Delta(G)\geq \chi(G)-1$ implies that $G$ contains a $K^{1}_{1,\chi(G)-1}$-immersion, where $\Delta(G)$ is the maximum degree of $G$.
As more evidence for this conjecture, Botler et al. \cite{FA} showed that this conjecture holds for $\ell = 2$.








\smallskip
\section{Proof of Theorem \ref{thm4}}


Assume that Theorem \ref{thm4} is not true. Let $G$ be a counterexample to Theorem \ref{thm4} with $|V(G)|$ as small as possible. 


\begin{claim}\label{claim0}
$n\geq4\ell-1$ is odd.
\end{claim}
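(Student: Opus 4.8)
The plan is to treat the two assertions separately: the lower bound $n \ge 4\ell - 1$ is an immediate consequence of the hypothesis of Theorem \ref{thm4}, while the parity statement is the substantive part and is handled through the minimality of $G$.

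For the inequality, I would simply unpack the constraint $2\ell \le \lceil \frac{n}{2}\rceil$ that is built into the statement of Theorem \ref{thm4}. Since $\lceil \frac{n}{2}\rceil \le \frac{n+1}{2}$ for every integer $n$, we obtain $2\ell \le \frac{n+1}{2}$, that is $4\ell \le n+1$, and hence $n \ge 4\ell - 1$. This step uses nothing about the counterexample beyond the range of $\ell$ for which it is a counterexample.

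For the parity, I would argue by contradiction against the minimality of $|V(G)|$. Suppose $n$ is even. The arithmetic observation that makes the reduction work is that deleting a single vertex leaves the target bipartite graph unchanged: when $n$ is even, $\lceil \frac{n-1}{2}\rceil = \frac{n}{2} = \lceil \frac{n}{2}\rceil$. So I would pick any vertex $v$ and set $G' := G - v$, a graph on $n-1$ vertices with $\alpha(G') \le \alpha(G) = 2$ and still satisfying $2\ell \le \lceil \frac{n-1}{2}\rceil$. If $\alpha(G') = 2$, then since $|V(G')| < |V(G)|$, the minimality of $G$ forces $G'$ to satisfy the conclusion of Theorem \ref{thm4}, so $G' \succeq_{i} K_{\ell,\lceil \frac{n-1}{2}\rceil - \ell} = K_{\ell,\lceil \frac{n}{2}\rceil - \ell}$. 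If instead $\alpha(G') = 1$, then $G'$ is the complete graph $K_{n-1}$, and since the target has $\lceil \frac{n}{2}\rceil \le n-1$ vertices, $K_{n-1}$ contains it as a subgraph, so again $G' \succeq_{i} K_{\ell,\lceil \frac{n}{2}\rceil - \ell}$. In either case, because $G'$ is a subgraph of $G$ and any immersion in $G'$ is an immersion in $G$, we get $G \succeq_{i} K_{\ell,\lceil \frac{n}{2}\rceil - \ell}$, contradicting that $G$ is a counterexample. Hence $n$ must be odd.

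I do not anticipate a genuine obstacle here, since this is a routine minimal-counterexample vertex-deletion argument. The two points that must be gotten exactly right are the ceiling identity $\lceil \frac{n-1}{2}\rceil = \lceil \frac{n}{2}\rceil$ for even $n$ (which is precisely what ensures $G'$ is being asked for the \emph{same} bipartite immersion, so that the minimality hypothesis applies verbatim) and the separate, trivial handling of the degenerate case $\alpha(G') = 1$, where $G'$ is complete.
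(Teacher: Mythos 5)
Your proof is correct and takes essentially the same approach as the paper: when $n$ is even, delete a vertex and use the identity $\lceil\frac{n-1}{2}\rceil=\lceil\frac{n}{2}\rceil$ together with the minimality of $G$ to contradict that $G$ is a counterexample, then derive $n\geq 4\ell-1$ from $2\ell\leq\lceil\frac{n}{2}\rceil$. Your explicit treatment of the degenerate case $\alpha(G-v)=1$ is a small point of extra care that the paper's proof glosses over (it applies Theorem \ref{thm4} to $G-v$ without noting that the hypothesis $\alpha=2$ might fail, though the conclusion is trivial in that case, exactly as you argue).
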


\begin{proof}
Assume that $n$ is even. Since Theorem \ref{thm4} holds for $G - \{v\}$ for any $v\in V(G)$, Theorem \ref{thm4} holds for $G$ as $\lceil\frac{n}{2}\rceil=\lceil\frac{n-1}{2}\rceil$, a contradiction. Therefore, $n$ is odd. Since $\ell \leq \lceil\frac{n}{2}\rceil- \ell=\frac{n+1}{2}-\ell$, we have $n\geq4\ell-1$.
\end{proof}

Set $n:=2m+1$. Then $\lceil\frac{n}{2}\rceil=m+1$ and $\ell\leq \lfloor\frac{m+1}{2}\rfloor$.
\begin{claim}\label{claim1}
For any $uv\notin E(G)$, $|N(u)\cap N(v)|\leq \ell-2$.
\end{claim}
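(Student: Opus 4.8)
The plan is to argue by contradiction against the minimality of $G$: assume there is a non-edge $uv$ with $|N(u)\cap N(v)|\ge \ell-1$, fix common neighbours $c_1,\dots,c_{\ell-1}\in N(u)\cap N(v)$, and manufacture a $K_{\ell,m+1-\ell}$-immersion in $G$. Two facts drive everything. First, the $\ell-1$ paths $u\,c_i\,v$ are pairwise edge-disjoint $u$--$v$ ``bridges'', and every edge they use, like every edge of the form $xu$ or $xv$, is incident with $u$ or $v$. Second, since $\alpha(G)=2$ and $uv\notin E(G)$, no third vertex can miss both $u$ and $v$, so every vertex of $V(G)\setminus\{u,v\}$ is adjacent to $u$ or to $v$.

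First I would delete the pair and work in $G_0:=G-\{u,v\}$, which has $2m-1$ vertices and $\alpha(G_0)\le 2$. If $\alpha(G_0)\le 1$ then $G_0$ is a clique on $2m-1\ge m+1$ vertices and already contains $K_{\ell,m+1-\ell}$ as a subgraph, hence as an immersion; so assume $\alpha(G_0)=2$. Because $\lceil (2m-1)/2\rceil=m$ and $K_{\ell,m-\ell}\cong K_{m-\ell,\ell}$, minimality of $G$ supplies an immersion of $K_{\ell,m-\ell}$ in $G_0$ for one of the two indexings, so in particular $G_0\succeq_i K_{\ell,m-\ell}$ with branch sets $X$ (size $\ell$) and $Y$ (size $m-\ell$) joined by edge-disjoint paths. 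The decisive bookkeeping observation is that all these paths lie in $G_0$, so every edge of $G$ incident with $u$ or $v$ is still free. Consequently it suffices to attach one of $u,v$ to the $Y$-side as a new branch vertex, since that raises $|Y|$ to $m+1-\ell$; this asks only for $\ell$ edge-disjoint $X$--$u$ (or $X$--$v$) paths that avoid the inherited edges.

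Now partition $X$ by its adjacencies in $G$: let $X_A,X_B,X_C$ be the vertices of $X$ adjacent to $u$ only, to $v$ only, and to both, which by the second fact above exhaust $X$. If $X_B=\emptyset$ then $X\subseteq N(u)$ and the free edges $\{xu:x\in X\}$ attach $u$ directly; symmetrically, if $X_A=\emptyset$ we attach $v$. In the remaining case both are nonempty, and I would attach $u$ as follows: each $x\in X_A\cup X_C$ is joined to $u$ by its free direct edge, while each $x\in X_B$ is routed to $u$ along the length-three detour $x\,v\,c_i\,u$, where the $c_i$ are distinct common neighbours chosen outside $X_C$. Every edge of such a detour ($xv$, $vc_i$, $c_iu$) is incident with $u$ or $v$, hence free and disjoint from the inherited immersion; using distinct $c_i$ keeps the detours edge-disjoint from one another and from the direct edges. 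This produces the missing $\ell$ paths and completes a $K_{\ell,m+1-\ell}$-immersion, the desired contradiction.

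The step to watch — and where the hypothesis is consumed — is this final routing. It requires enough common neighbours outside $X_C$ to serve $X_B$ injectively, i.e.\ $|C\setminus X_C|\ge |X_B|$, where $C:=N(u)\cap N(v)$. Since $|X_A|\ge 1$ in the main case, one has $|X_B|+|X_C|\le \ell-1$, so the supply $|C\setminus X_C|\ge (\ell-1)-|X_C|$ is exactly large enough; this is precisely the point at which $\ell-1$ common neighbours are both needed and sufficient, and it is what pins the threshold at $\ell-2$. The only genuine care required is the edge-disjointness accounting of the last paragraph, and the clean resolution is that every newly used edge touches $u$ or $v$ and therefore cannot clash with the immersion inherited from $G_0$.
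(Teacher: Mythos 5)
Your proof is correct and takes essentially the same approach as the paper's: delete $\{u,v\}$, invoke minimality to obtain a $K_{\ell,m-\ell}$-immersion in $G-\{u,v\}$, and attach $u$ (or $v$) to the larger side via direct edges plus length-three detours through unused common neighbours, all new edges being incident to $u$ or $v$ and hence disjoint from the inherited immersion. The differences are cosmetic: your detour $x$-$v$-$c_i$-$u$ is the paper's path $u$-$f(c)$-$v$-$c$ written in reverse (your $C\setminus X_C$ is exactly the paper's set $D$), and you additionally spell out the degenerate cases ($\alpha(G-\{u,v\})\le 1$ and the indexing when $2\ell=m+1$) that the paper leaves implicit.
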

\begin{proof}
Suppose to the contrary that $|N(u)\cap N(v)|\geq \ell-1$ for some non-adjacent vertices $\{u,v\}$.
By the minimality of $G$, the graph $G-\{u,v\}$ has a $K_{\ell,m-\ell}$-immersion $K'$. Let $L$ be the branch vertices of the part of $K'$ with size $\ell$.
When $u$ or $v$ is complete to $L$, the graph $G$ has a $K_{\ell,m+1-\ell}$-immersion, a contradiction.
So neither $u$ nor $v$ is complete to $L$. Since $\alpha(G)=2$ and $uv\notin E(G)$, $L$ can be partitioned into $A,B,C$ such that $u$ is complete to $A$ but $v$ is anti-complete to $A$, $\{u,v\}$ is complete to $B$, and $v$ is complete to $C$ but $u$ is anti-complete to $C$.
Let $D$ be the subset of $V(G)-\{u,v\}-L$ consisting of vertices complete to $\{u,v\}$.
Since $|L|=|A\cup B\cup C|=\ell$ and $A\neq \emptyset$, we have $|B\cup C|\leq \ell-1$. Moreover, since $|B\cup D|=|N(u)\cap N(v)|\geq \ell-1$, we have $|D|\geq |C|$.
Let $f$ be an injection from $C$ to $D$. Then $\{u$-$f(c)$-$v$-$c: c\in C\}$ are edge-disjoint paths from $u$ to each vertex of $C$. So $G$ has a $K_{\ell,m+1-\ell}$-immersion as $u$ is complete to $A\cup B$, which is a contradiction.
\end{proof}

We say a graph $H$ is {\em $\alpha$-critical} if for each edge $e$ of $H$ we have $\alpha(H\backslash e)>\alpha(H)$. Without loss of generality we may further assume that $G$ is $\alpha$-critical. Hence, for each edge $uv\in E(G)$, there exists some $z\in V(G)-\{u,v\}$ such that $z$ is anti-complete to $\{u,v\}$.

For any two non-adjacent vertices $x$ and $y$, set \[C := N(x)\cap N(y),\ X:=V(G)-N[y],\ Y:= V(G)-N[x].\] Since $\alpha(G)=2$, we have $x\in X$, $y\in Y$ and $G[X]$ and $G[Y]$ are cliques.  By Claim \ref{claim1}, we have $|C|\leq \ell-2$.
Let $X'_C$ be the subset of $X$ consisting of vertices complete to $C$, and set $X''_C: = X \backslash X'_C$. Let $Y'_C$ and $Y''_C $ be defined similarly. Note that $x\in X'_C$ and $y\in Y'_C$.
Similarly, for any $a\in C$, let $X'_a$ denote the subset of $X$ consisting of vertices complete to $a$, and set $X''_a: = X \backslash X'_a$. Let $Y'_a$ and $Y''_a $ be defined similarly. Evidently,  $X'_C \subseteq X'_a$ and $Y'_C \subseteq Y'_a$. Moreover, since $\alpha(G)=2$, the set $X''_a$ is complete to $Y''_a$.

\begin{claim}\label{claim2}
For every vertex $a \in C$, we have $X''_a\neq \emptyset$ and $Y''_a \neq \emptyset.$
\end{claim}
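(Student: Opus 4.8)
The plan is to derive both nonemptiness statements directly from the $\alpha$-criticality of $G$, applied to the two edges joining $a$ to $x$ and to $y$. Since $a\in C=N(x)\cap N(y)$, both $xa$ and $ya$ are edges of $G$, so the $\alpha$-critical property is available for each of them; recall that this property guarantees, for every edge $uv$, a vertex $z\in V(G)-\{u,v\}$ anti-complete to $\{u,v\}$.

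First I would prove $X''_a\neq\emptyset$. Applying $\alpha$-criticality to the edge $ya$, I obtain a vertex $z\in V(G)-\{y,a\}$ that is anti-complete to $\{y,a\}$, i.e. adjacent to neither $y$ nor $a$. Because $z\neq y$ and $z\notin N(y)$, we have $z\notin N[y]$, hence $z\in X$; and because $z\notin N(a)$, the vertex $z$ is not adjacent to $a$, so $z\in X''_a$. This already shows $X''_a\neq\emptyset$. By the symmetric argument—applying $\alpha$-criticality to the edge $xa$ and observing that the resulting vertex lies outside $N[x]$, hence in $Y$, and is non-adjacent to $a$—I would obtain a vertex in $Y''_a$, giving $Y''_a\neq\emptyset$.

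There is no serious obstacle here: the content of the claim is essentially a translation of $\alpha$-criticality into the notation for $X$, $Y$ and the sets $X''_a$, $Y''_a$, and it does not even require Claim \ref{claim1}. The one point that needs care is matching the definitions in the correct direction. The edge through $y$ produces a vertex outside $N[y]$, which is the set $X$, so it witnesses $X''_a\neq\emptyset$; the edge through $x$ produces a vertex outside $N[x]$, which is $Y$, so it witnesses $Y''_a\neq\emptyset$. Getting these two correspondences the right way round (edge $ya$ feeds $X$, edge $xa$ feeds $Y$) is the only place where an error could slip in.
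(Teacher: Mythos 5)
Your proposal is correct and is essentially the paper's own argument: the paper also derives the claim from $\alpha$-criticality applied to the edge joining $a$ to $x$ (resp.\ $y$), merely phrased as a contradiction (assuming $a$ complete to $Y$ makes $\{a,x\}$ dominating), whereas you state the direct, contrapositive form. Your version is in fact marginally cleaner, since placing the witness $z$ in $Y$ follows immediately from the definition $Y=V(G)-N[x]$ without invoking that $x$ is complete to $C\cup(X\setminus\{x\})$.
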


\begin{proof}
Suppose not. By symmetry we may assume that $a$ is complete to $Y$. Since $x$ is complete to $C \cup (X\backslash \{x\})$ and $ax\in E(G)$, there exists no vertex anti-complete to $\{a,x\}$, which contradicts to the fact that $G$ is $\alpha$-critical. 
\end{proof}

\begin{claim}\label{claim3}
For any vertex $a \in C$, we have that $|X'_C| \leq |X'_a| \leq \ell- 2$ and $|Y'_C| \leq |Y'_a| \leq \ell-2$. Furthermore, we have $|X''_a | \geq m+4-|Y|$ and $|Y''_a | \geq m+4-|X|$.
\end{claim}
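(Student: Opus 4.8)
The plan is to treat the four inequalities in three groups. The two leftmost bounds, $|X'_C| \le |X'_a|$ and $|Y'_C| \le |Y'_a|$, are immediate from the inclusions $X'_C \subseteq X'_a$ and $Y'_C \subseteq Y'_a$ recorded just before the claim, so no work is needed there. The substantive core is the bound $|X'_a| \le \ell-2$ (and its mirror $|Y'_a| \le \ell - 2$); once this is in hand, I will deduce the ``furthermore'' lower bounds on $|X''_a|$ and $|Y''_a|$ by a short counting argument. A preliminary observation I would establish first is that $X$, $Y$, $C$ partition $V(G)$: they are pairwise disjoint (e.g.\ $X\subseteq V\setminus N[y]$ while $C\subseteq N(y)$), and no vertex can lie outside all three, since a vertex non-adjacent to both $x$ and $y$ would, together with $x$ and $y$, form an independent set of size $3$, contradicting $\alpha(G)=2$. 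Hence $|X|+|Y|+|C| = n = 2m+1$.

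For the key bound $|X'_a|\le \ell-2$, the idea is to exhibit a non-adjacent pair whose common neighbourhood contains $X'_a$ and then invoke Claim~\ref{claim1}. By Claim~\ref{claim2} the set $X''_a$ is nonempty, so I may pick some $q\in X''_a$; then $q\in X$ and $q$ is non-adjacent to $a$, while $a\in C$ and $q\in X$ forces $a\neq q$, so $aq\notin E(G)$. The point is that every $u\in X'_a$ lies in $N(a)\cap N(q)$: by definition $u$ is adjacent to $a$, and since $u,q$ are distinct vertices of the clique $G[X]$ they are adjacent as well. Thus $X'_a\subseteq N(a)\cap N(q)$, and Claim~\ref{claim1} applied to the non-adjacent pair $\{a,q\}$ yields $|X'_a|\le |N(a)\cap N(q)|\le \ell-2$. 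The symmetric bound $|Y'_a|\le \ell-2$ follows identically, choosing $q'\in Y''_a$ (nonempty by Claim~\ref{claim2}) and using that $G[Y]$ is a clique.

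Finally I would derive the two lower bounds by counting. From the partition identity and $|C|\le \ell-2$ (Claim~\ref{claim1}) we get $|X| = 2m+1 - |Y| - |C| \ge 2m+3-\ell-|Y|$, and combining with $|X''_a| = |X| - |X'_a| \ge |X| - (\ell-2)$ gives $|X''_a| \ge 2m+5-2\ell-|Y|$. Since $\ell\le \lfloor\frac{m+1}{2}\rfloor$ implies $2\ell\le m+1$, we have $2m+5-2\ell \ge m+4$, hence $|X''_a| \ge m+4-|Y|$; the bound $|Y''_a|\ge m+4-|X|$ is symmetric. The main obstacle is the one genuinely clever step, namely producing the non-adjacent pair $\{a,q\}$ needed to apply Claim~\ref{claim1}: the right choice is to take $q$ inside the \emph{same} clique $X$ that contains $X'_a$, so that cliquishness of $G[X]$ automatically places all of $X'_a$ in $N(q)$, while $q\in X''_a$ guarantees $q\not\sim a$; everything else is then bookkeeping against the inequality $2\ell\le m+1$.
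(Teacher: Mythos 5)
Your proposal is correct and follows essentially the same route as the paper's proof: pick a witness $a_1\in X''_a$ (nonempty by Claim~\ref{claim2}), use that $G[X]$ is a clique to get $X'_a\subseteq N(a)\cap N(a_1)$, apply Claim~\ref{claim1} to the non-adjacent pair, and then count via $n=|X|+|Y|+|C|$ together with $2\ell\le m+1$. The only differences are cosmetic: you spell out the partition $V(G)=X\cup Y\cup C$ that the paper uses implicitly (note, though, that covering follows directly from the definitions of $X$, $Y$, $C$, while it is the disjointness of $X$ and $Y$ that needs $\alpha(G)=2$), and you invoke $2\ell\le m+1$ directly where the paper cites the equivalent $n\ge 4\ell-1$.
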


\begin{proof}
By symmetry it suffices to show that Claim \ref{claim3} holds for $X'_a$ and $X''_a $. 
Note that $|X'_C|\leq|X'_a|$ as $X'_C\subseteq X'_a$.
By Claim \ref{claim2}, there exists a vertex $a_1 \in X''_a$.
For the non-adjacent pair $\{a,a_1\}$, since $G[X]$ is a clique, $X'_a\subseteq N(a)\cap N(a_1)$. Thus $|X'_a|\leq \ell-2$ by Claim \ref{claim1}.
Moreover, since $n\geq4\ell-1$ and $|C|\leq \ell-2$, we have $$|X''_a |=n-|C|-|X'_a|-|Y|\geq n-2(\ell-2)-|Y|\geq m+4-|Y|.$$ 
\end{proof}

\begin{claim}\label{claim4}
$G[C]$ is a clique.
\end{claim}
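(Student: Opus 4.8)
The plan is to derive a contradiction from the assumption that $C$ contains two non-adjacent vertices $a$ and $b$. Since such a pair together with $x,y$ forms an induced four-cycle $xayb$, proving Claim \ref{claim4} for every non-adjacent pair $x,y$ is exactly the assertion that $G$ has no induced $4$-vertex-cycle, after which Theorem \ref{lem1} yields $G\succeq_i K_{m+1}\succeq_i K_{\ell,m+1-\ell}$ and finishes the whole argument. So suppose $a,b\in C$ with $ab\notin E(G)$. The first step is to record what $\alpha(G)=2$ gives for this pair: no vertex can be anti-complete to both $a$ and $b$, so $X''_a\cap X''_b=\emptyset$ and $Y''_a\cap Y''_b=\emptyset$, whence $X=X'_a\cup X'_b$ and $Y=Y'_a\cup Y'_b$. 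In particular every vertex of $X''_a$ lies in $X'_b\subseteq N(b)$, and likewise $Y''_a\subseteq N(b)$.

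The key construction is then a large clique. Because $X''_a$ and $Y''_a$ are contained in the cliques $G[X]$ and $G[Y]$, because $X''_a$ is complete to $Y''_a$ (as already noted in the set-up), and because both sets lie in $N(b)$, the set $Q:=\{b\}\cup X''_a\cup Y''_a$ induces a clique of size $|Q|=1+|X|+|Y|-|X'_a|-|Y'_a|=n+1-|C|-|X'_a|-|Y'_a|$. Since a clique on $|Q|$ vertices immerses $K_{|Q|}$, and $K_{|Q|}$ contains $K_{\ell,m+1-\ell}$ as soon as $|Q|\ge m+1$, we get the desired contradiction whenever $|C|+|X'_a|+|Y'_a|\le m+1$; by Claim \ref{claim3} (which bounds each of $|C|,|X'_a|,|Y'_a|$ by $\ell-2$) this already settles every case with $m$ not too small, and the symmetric clique $\{a\}\cup X''_b\cup Y''_b$ gives a second such bound. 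At the other extreme, feeding $|X'_a|,|X'_b|,|Y'_a|,|Y'_b|,|C|\le\ell-2$ into $n=|X|+|Y|+|C|$ yields $n\le 5\ell-10$, which contradicts $n\ge 4\ell-1$ once $\ell$ is small; so the two estimates between them dispose of the unbalanced ranges of parameters.

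The main obstacle is the remaining, essentially balanced range, where $|Q|<m+1$ but the crude count $n\le 5\ell-10$ is not yet contradictory (this is the regime with $\ell$ close to $\frac{m+1}{2}$). Here the idea is to use $Q$ as the backbone of the immersion: put $\ell$ of its vertices on the small side and the rest on the large side, all mutually adjacent inside the clique, and then promote the remaining $m+1-|Q|$ vertices, drawn from $V\setminus Q$, to branch vertices of the large side, linking each of them to the $\ell$ chosen clique-vertices by paths routed through the dense clique $Q$ and through the two non-adjacent common neighbours $x,y$ of $a,b$. Guaranteeing that all these linking paths can be chosen edge-disjoint is the delicate part of the proof, and it is exactly where the induced four-cycle is used, since $a\text{-}x\text{-}b$ and $a\text{-}y\text{-}b$ furnish two independent detours while Claims \ref{claim1}--\ref{claim3} pin the common neighbourhoods down tightly enough to supply the rest. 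Once the immersion is assembled we obtain $G\succeq_i K_{\ell,m+1-\ell}$, contradicting the choice of $G$, so no non-adjacent pair exists in $C$ and $G[C]$ is a clique.
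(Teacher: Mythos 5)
Your first two steps are sound: $Q=\{b\}\cup X''_a\cup Y''_a$ is indeed a clique (from $X''_a\cap X''_b=\emptyset$, $Y''_a\cap Y''_b=\emptyset$ and the completeness of $X''_a$ to $Y''_a$), and both of your estimates are correctly computed. But they do not cover all cases, and the third part of your argument --- the only part addressing what remains --- is not a proof. The clique $Q$ yields a contradiction only when $|C|+|X'_a|+|Y'_a|\le m+1$, which via Claims \ref{claim1} and \ref{claim3} is guaranteed only when $3\ell-6\le m+1$, i.e.\ $m\ge 3\ell-7$; and the count $n\le 5\ell-10$ contradicts $n\ge 4\ell-1$ only when $\ell\le 8$. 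The uncovered range $\ell\ge 9$, $2\ell-1\le m\le 3\ell-8$ is infinite and genuinely nonempty: for instance $\ell=9$, $n=35$ (so $m=17$) is consistent with $|C|=|X'_a|=|X'_b|=|Y'_a|=|Y'_b|=7$, in which case $|Q|=15<18=m+1$ and $n=5\ell-10$ exactly, so neither estimate bites. For precisely this range you offer only the statement that paths should be routed ``through the dense clique $Q$ and through $x,y$'' and that edge-disjointness ``is the delicate part of the proof.'' That delicate part is never carried out, and it is the entire content of the claim: each of the $m+1-|Q|$ promoted vertices of $V\setminus Q\subseteq(C\setminus\{b\})\cup X'_a\cup Y'_a$ needs $\ell$ edge-disjoint paths to the small side, nothing in Claims \ref{claim1}--\ref{claim3} controls how those vertices attach to $Q$, and the two detours $a$-$x$-$b$, $a$-$y$-$b$ supply only a bounded number of extra connections. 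As written, the sketch does not assemble into an immersion.

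For comparison, the paper's proof needs no case split at all, and the ingredient it uses that your estimates never exploit is the \emph{lower} bound of Claim \ref{claim3}. Writing $u,v$ for the non-adjacent pair in $C$ and assuming $|X''_v|\ge|Y''_v|$, it takes the small side to be $X''_u\cup X_1$ with $X_1\subseteq X''_v$ chosen so that $|X''_u\cup X_1|=\ell$ (possible since $|X''_u\cup X''_v|\ge|X''_u\cup Y''_v|\ge n-3(\ell-2)>\ell$), and the large side to be $(X\setminus(X''_u\cup X_1))\cup Y_1$ with $Y_1\subseteq Y''_u$ of size $m+1-|X|$ (possible since $|Y''_u|\ge m+4-|X|$). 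All required adjacencies are then direct edges except those between $X_1$ and $Y_1$, and these are realized by length-two paths $x_i$-$z_{i+j}$-$y_j$ through $Y''_v$, the index $i+j$ taken modulo $|Y''_v|$; edge-disjointness follows from $|Y''_v|>|X_1|$ and $|Y''_v|>|Y_1|$. If you want to salvage your approach, your ``balanced range'' needs an explicit routing and counting argument of exactly this kind.
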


\begin{proof}
Suppose to the contrary that $G[C]$ is not a clique. Let $u,v\in C$ with $uv\notin E(G)$.
Since $\alpha(G)=2$, we have $X''_v \cap X''_u$, $Y''_v \cap Y''_u$$=\emptyset$.
By symmetry we may suppose that $|X''_v|\geq |Y''_v|$.
Since $|C|, |X'_u|, |Y'_v|\leq \ell-2$ by Claims \ref{claim1} and \ref{claim3},
$|X''_u\cup Y''_v|\geq n-3(\ell-2)\geq 4\ell-1-(3\ell-6)>\ell$.
Since $X''_v \cap X''_u = \emptyset$ and $|X''_v|\geq |Y''_v|$, $|X''_u\cup X''_v|>\ell$.
Choose $X_1\subseteq X''_v$ such that $|X''_u\cup X_1|=\ell$. 
Choose $Y_1\subseteq Y''_u$ such that $|Y_1|=m+1-|X|$.
Since $|Y''_u|> m+4-|X|$ by Claim \ref{claim3}, such $Y_1$ exists.
Our aim is to find a $K_{\ell,m+1-\ell}$-immersion with $X''_u\cup X_1$ and $(X\backslash (X''_u\cup X_1)) \cup Y_1$ as its part of branch vertices of size $\ell$ and $m+1-\ell$, respectively.

Since $G[X]$ is a clique, $X''_u\cup X_1$ is complete to $X\backslash (X''_u\cup X_1)$. Moreover, since $\alpha(G)=2$, the set $X''_v$ is complete to $Y''_v$, and $X''_u$ is complete to $Y''_u$. Hence, it suffices to show that there are edge-disjoint paths joining each vertex in $X_1$ to each vertex in $Y_1$. Set $X_1:=\{x_1,...,x_{|X_1|}\}$, $Y_1:=\{y_1,...,y_{|Y_1|}\}$ and $Y''_v:=\{z_1,...,z_{|Y''_v|}\}$.
Since $Y_1\subseteq Y''_u$, we have $Y_1 \cap Y''_v= \emptyset$.
For each pair of vertices $\{x_i, y_j\}$, since $G[Y]$ is a clique, $P_{ij}=x_i$-$z_{i+j}$-$y_j$ is a path, where the subscript of $z_{i+j}$ is taken modulo $|Y''_v|$.
Since $|X''_u\cup Y''_v|>\ell=|X''_u\cup X_1|$, we have $|Y''_v|> |X_1|$.
Moreover, by Claim \ref{claim3}, $|Y''_v|> m+1-|X|=|Y_1|$. Hence, $\{P_{ij}:\  1\leq i\leq |X_1|,\ 1\leq j\leq |Y_1|\}$ are edge-disjoint paths. Hence, $G$ has a $K_{\ell,m+1-\ell}$-immersion, which is a contradiction.
\end{proof}

Since $xy\notin E(G)$ by the choice of $x,y$, there is no induced 4-vertex-cycle in $G$ containing $x$, $y$ by Claim \ref{claim4} and the definition of $C$. Moreover, by the arbitrary choice of $x,y$ and symmetry, the graph $G$ does not contain an induced 4-vertex-cycle. 
Thus, $G\succeq_i K_{\lceil\frac{n}{2}\rceil}$ by Theorem \ref{lem1},  which is a contradiction.

\section{Acknowledgments}
We would like to express our gratitude to two anonymous reviewers for their diligent review and valuable suggestions, which significantly enhanced the clarity and presentation of this paper.


	
	
	\vspace{5pt}

\end{document}